\newtheorem{theo}{Theorem}
\newtheorem{cor}[theo]{Corollary}
\theoremstyle{remark}
\newtheorem{rema}[theo]{\bf Remark}
\theoremstyle{remark}
\newtheorem{exam}[theo]{\bf Example}
\theoremstyle{remark}
\begin{document}

\title{Automorphism groups of maps, hypermaps and dessins}

\author{Gareth A. Jones}

\address{School of Mathematics, University of Southampton, Southampton SO17 1BJ, UK}
\email{G.A.Jones@maths.soton.ac.uk}

\subjclass[2010]{Primary 05C10, secondary 14H57, 20B25, 20B27, 52B15, 57M10}
\keywords{Permutation group, centraliser, automorphism group, map, hypermap, dessin d'enfant}
\maketitle


\begin{abstract}
A detailed proof is given of a theorem describing the centraliser of a transitive permutation group, with applications to automorphism groups of objects in various categories of maps, hypermaps, dessins, polytopes and covering spaces, where the automorphism group of an object is the centraliser of its monodromy group.  An alternative form of the theorem, valid for finite objects, is discussed, with counterexamples based on Baumslag--Solitar groups to show how it fails more generally. The automorphism groups of objects with primitive monodromy groups are described, as are those of non-connected objects.
\end{abstract}


\section{Introduction}\label{intro}

In certain categories $\mathfrak C$, such as those consisting of maps or hypermaps, oriented or unoriented, or of dessins d'enfants (regarded as finite oriented hypermaps), each object $\mathcal O$ can be identified with a permutation representation $\theta:\Gamma\to S:={\rm Sym}(\Omega)$ of a `parent group' $\Gamma=\Gamma_{\mathfrak C}$ on some set $\Omega$, and the morphisms ${\mathcal O}_1\to{\mathcal O_2}$ can be identified with the functions $\Omega_1\to\Omega_2$ which commute with the actions of $\Gamma$ on the corresponding sets $\Omega_1$ and $\Omega_2$. These are the `permutational categories' defined and discussed in~\cite{Jon16}. The automorphism group ${\rm Aut}_{\mathfrak C}(\mathcal O)$ of an object $\mathcal O$ within $\mathfrak C$ is then identified with the centraliser $C:=C_S(G)$ in $S$ of the monodromy group $G:=\theta(\Gamma)$ of $\mathcal O$. Now $\mathcal O$ is connected if and only if $G$ is transitive on $\Omega$, as we will assume until the final section of this paper. In this situation, an important result is the following:

\begin{theo}\label{isothm}
If $\mathcal O$ is connected then
\[{\rm Aut}_{\mathfrak C}({\mathcal O})\cong N_G(H)/H\cong N_{\Gamma}(M)/M,\]
where $H$ and $M$ are the stabilisers in $G$ and $\Gamma$ of some $\alpha\in\Omega$. 
\end{theo}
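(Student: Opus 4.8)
The plan is to exploit the transitivity of $G$ by identifying the $G$-set $\Omega$ with the coset space $G/H$, where $H=G_\alpha$ is the stabiliser of the chosen base point $\alpha$ and $G$ acts by left multiplication: the orbit--stabiliser correspondence $gH\mapsto g\cdot\alpha$ is a $G$-equivariant bijection. Since, as recalled above, the automorphisms of $\mathcal O$ are exactly the permutations of $\Omega$ commuting with $G$, it suffices to compute the centraliser $C=C_S(G)$ as the group of $G$-equivariant bijections of $G/H$, and then to transport the result up to $\Gamma$ along $\theta$.

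First I would note that a $G$-equivariant bijection $\phi$ is determined by the single value $\phi(H)=gH$, because equivariance forces $\phi(xH)=x\cdot\phi(H)=xgH$ for every $x\in G$. Demanding that this be well defined on cosets yields the condition $g^{-1}Hg\subseteq H$, and bijectivity promotes the inclusion to an equality $g^{-1}Hg=H$, that is $g\in N_G(H)$; conversely every $g\in N_G(H)$ produces such a bijection $\phi_g\colon xH\mapsto xgH$. Thus $C=\{\phi_g : g\in N_G(H)\}$, and $\phi_g=\phi_{g'}$ precisely when $gH=g'H$.

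Next I would examine the map $g\mapsto\phi_g$. A one-line computation gives $\phi_g\phi_{g'}=\phi_{g'g}$, so this is an \emph{anti}-homomorphism from $N_G(H)$ onto $C$ with kernel $H$; composing it with inversion (equivalently, using the convention $\phi_g\colon xH\mapsto xg^{-1}H$) turns it into a genuine surjective homomorphism $N_G(H)\to C$ with kernel exactly $H$, giving the first isomorphism $C\cong N_G(H)/H$. For the second isomorphism I would pull everything back along the surjection $\theta\colon\Gamma\to G$: the stabiliser $M=\Gamma_\alpha$ equals the full preimage $\theta^{-1}(H)$ and hence contains $\ker\theta$, so the correspondence theorem yields $N_\Gamma(M)=\theta^{-1}(N_G(H))$ together with $N_\Gamma(M)/M\cong N_G(H)/H$.

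The step I expect to be the main obstacle is not any single deep idea but the careful bookkeeping of left versus right actions and the attendant homomorphism-versus-anti-homomorphism issue: the naive assignment $g\mapsto(xH\mapsto xgH)$ reverses products, so one must either insert an inverse or pass to the opposite group, while simultaneously verifying that well-definedness and injectivity pin down exactly the normaliser condition $g\in N_G(H)$ and the coset $gH$ respectively.
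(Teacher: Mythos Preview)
Your proof is correct and follows essentially the same approach as the paper: identify $\Omega$ with the coset space of $H$ in $G$ and let $N_G(H)$ act by multiplication on the opposite side, with the inverse inserted to turn the anti-homomorphism into a homomorphism. The only noteworthy difference is that you establish surjectivity of $N_G(H)\to C$ directly (any $G$-equivariant bijection is a $\phi_g$), whereas the paper first shows $C$ is semi-regular and then observes that the image of $N_G(H)$ is already transitive on the set $\Phi=\{\beta:G_\beta=H\}$, which simultaneously yields the regularity statement in Theorem~\ref{autothm}(2).
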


There are analogous results in other contexts, ranging from abstract polytopes to covering spaces. Proofs of Theorem~\ref{isothm} for particular categories can be found in the literature: for instance, in~\cite{JS} it is deduced for oriented maps from a more general result about morphisms in that category; in~\cite[Theorem~2.2 and Corollary~2.1]{JW} a proof for dessins is briefly outlined; analogous results for covering spaces are proved in~\cite[Appendix]{Mas} and~\cite[Theorem~81.2]{Mun}, and for abstract polytopes in~\cite[Propositions~2D8 and 2E23(a)]{MS}. In \S\ref{autoproof} we give a detailed proof of
the following `folklore' theorem about permutation groups, which is applicable to a wide class of categories; these are the permutational categories defined in~\cite{Jon16} and in~\S\ref{permcats}, including all the categories mentioned above.

\begin{theo}\label{autothm}
Let $G$ be a transitive finite permutation group on a set $\Omega$, with $H$ the stabiliser of some $\alpha\in\Omega$, and let $C:=C_S(G)$ be the centraliser of $G$ is the symmetric group $S:={\rm Sym}(\Omega)$. Then
\begin{enumerate}
\item $C \cong N_G(H)/H$,
\item $C$ acts regularly on the set $\Phi$ of elements of $\Omega$ with stabiliser $H$.
\end{enumerate}
\end{theo}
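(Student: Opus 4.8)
The plan is to make everything explicit by identifying $\Omega$ with the coset space of $H$ in $G$. Since $G$ is transitive, I would fix the identification $\Omega \cong \{Hg : g \in G\}$ under which $\alpha$ corresponds to the trivial coset $H$ and $G$ acts by right multiplication, $Hg \cdot x = Hgx$; then the stabiliser of $Hg$ is $g^{-1}Hg$, and in particular $G_\alpha = H$ as required. With this in place, the goal is to pin down every element of $C$ concretely as a map induced by left multiplication on coset representatives.

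The central observation I would establish first is that any $c \in C$ preserves point stabilisers: since $c$ commutes with the $G$-action and is a bijection, $g$ fixes $c(\omega)$ iff $g$ fixes $\omega$, so the stabiliser of $c(\omega)$ equals that of $\omega$ for every $\omega$. Applying this to $\alpha$ shows that $\beta := c(\alpha)$ has stabiliser $H$; writing $\beta = Hk$ and comparing with the stabiliser $k^{-1}Hk$ of $Hk$ forces $k \in N_G(H)$. Moreover, commutativity gives $c(Hg) = c(\alpha \cdot g) = \beta \cdot g = Hkg$ for all $g$, so $c$ is completely determined by $\beta$ and equals the map $c_k : Hg \mapsto Hkg$. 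Conversely, for each $k \in N_G(H)$ this formula is well-defined (the normaliser condition is exactly what respects the coset relation) and commutes with right multiplication, so $c_k \in C$. The assignment $k \mapsto c_k$ is then a homomorphism $N_G(H) \to C$ (one checks $c_k c_{k'} = c_{kk'}$), it is onto by the previous sentence, and its kernel is $\{k : Hk = H\} = H$; this yields the isomorphism $C \cong N_G(H)/H$ of part (1).

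For part (2) I would first identify $\Phi$: since the stabiliser of $Hk$ is $k^{-1}Hk$, a point $Hk$ lies in $\Phi$ precisely when $k \in N_G(H)$, so $\Phi = \{Hk : k \in N_G(H)\}$, which is exactly the orbit of $\alpha$ under $C$ because $c_k(\alpha) = Hk$. The stabiliser-preservation observation shows $C$ maps $\Phi$ into itself, and the orbit computation shows $C$ acts transitively on $\Phi$. Finally, $c_k$ fixes some coset $Hg$ iff $Hkg = Hg$ iff $k \in H$ iff $c_k$ is the identity; thus $C$ acts freely (indeed semiregularly on all of $\Omega$), and freeness together with transitivity on $\Phi$ gives the regular action asserted in (2).

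The bulk of the argument is routine coset bookkeeping; the one step carrying the real content is the surjectivity of $k \mapsto c_k$, which hinges on the stabiliser-preservation property to force $c(\alpha) \in \Phi$ and hence $k \in N_G(H)$. I would also take care to order the multiplications so that $k \mapsto c_k$ is a genuine homomorphism rather than an anti-homomorphism, and I note that finiteness of $G$ is not actually used anywhere in this argument; it is presumably imposed only to match the categorical setting, with the distinction between finite and infinite objects deferred to the alternative formulation discussed later in the paper.
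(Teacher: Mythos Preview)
Your proof is correct and follows essentially the same plan as the paper's: identify $\Omega$ with the right-coset space $H\backslash G$, build an explicit map from $N_G(H)$ into $C$ by left multiplication on cosets, and read off the kernel as $H$. The only structural difference worth noting is how surjectivity onto $C$ is obtained. The paper first proves, as a separate lemma, that $C$ acts semiregularly on all of $\Omega$; it then observes that the image of $N_G(H)$ is transitive on $\Phi$, and since a semiregular group has no proper transitive subgroup, the image must equal $C$. You instead argue directly: any $c\in C$ preserves point-stabilisers, so $c(\alpha)=Hk$ with $k\in N_G(H)$, and then commutativity forces $c=c_k$. Your route is slightly more direct and avoids isolating semiregularity as a preliminary step; the paper's route has the minor advantage of making the semiregularity of $C$ on the whole of $\Omega$ explicit up front. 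Your remark that finiteness is nowhere used is also correct and matches the paper's later discussion of the alternative formulation $(2')$.
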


One sometimes finds proofs or statements of particular cases of Theorem~\ref{autothm} which include the assertion that $C$ acts regularly on the set $\Phi$ of fixed points of $H$ in $\Omega$; while this is valid if $H$ is finite, in \S\ref{alternative} we give counter-examples, based on Baumslag-Solitar groups~\cite{BS}, to show that if $H$ is infinite then $\Phi$ must be redefined more precisely as
in (2). It follows from Theorem~\ref{isothm} that if the monodromy group $G$ of an object $\mathcal O$ acts primitively on $\Omega$, then either ${\rm Aut}_{\mathfrak C}(\mathcal O)$ is trivial, or $G$ is a cyclic group of prime order, acting regularly on $\Omega$; in \S\ref{primitive} we describe the objects with the latter property in various categories $\mathfrak C$. In \S\ref{noncon} we briefly consider the structure and cardinality of the automorphism groups of non-connected objects in permutational categories.

\medskip

\noindent{\bf Acknowledgments.} The author is grateful to Ernesto Girondo, Gabino Gonz\'alez-Diez and Rub\'en Hidalgo for discussions about dessins d'enfants which motivated this work.


\section{Permutational categories}\label{permcats}

A {\em permutational category\/} $\mathfrak C$ is defined in~\cite{Jon16} to be a category in which the objects $\mathcal O$ can be identified with the permutation representations $\theta:\Gamma\to S:={\rm Sym}(\Omega)$ of a {\em parent group\/} $\Gamma=\Gamma_{\mathfrak C}$, and the morphisms ${\mathcal O}_1\to{\mathcal O}_2$ with the $\Gamma$-invariant functions $\Omega_1\to\Omega_2$, those commuting with the actions of $\Gamma$ on $\Omega_1$ and $\Omega_2$. The {\em automorphism group\/} ${\rm Aut}({\mathcal O})={\rm Aut}_{\mathfrak C}({\mathcal O})$ of $\mathcal O$ in the category $\mathfrak C$ is then the group of all permutations of $\Omega$ commuting with $\Gamma$, that is, the centraliser $C_S(G)$ of the {\em monodromy group\/} $G=\theta(\Gamma)$ of $\mathcal O$ in the symmetric group $S$. Here we will restrict our attention to the {\em connected\/} objects, those for which $\Gamma$ acts transitively on $\Omega$.

We will concentrate mainly on five particular examples of permutational categories, outlined briefly below (for full details, and other examples, see~\cite{Jon16}). In each case, the parent group $\Gamma$ is either an extended triangle group
\[\Delta[p,q,r]=\langle R_0, R_1, R_2\mid R_i^2=(R_1R_2)^p=(R_2R_0)^q=(R_0R_1)^r=1\rangle,\]
or its orientation-preserving subgroup of index $2$, the triangle group
\[\Delta(p,q,r)=\langle X, Y, Z\mid X^p=Y^q=Z^r=XYZ=1\rangle,\]
where $X=R_1R_2$, $Y=R_2R_0$ and $Z=R_0R_1$. Here $p, q, r\in{\mathbb N}\cup\{\infty\}$, and we ignore any relations of the form $W^{\infty}=1$. In what follows, $*$ denotes a free product, $C_n$ denotes a cyclic group of order $n\in{\mathbb N}\cup\{\infty\}$, while $V_n$ is an elementary abelian group of order $n$, and $F_r$ is a free group of rank $r$.

\smallskip

\noindent{\bf 1.} The category $\mathfrak M$ of maps on surfaces (possibly non-orientable or with boundary), that is, embeddings of graphs with simply connected faces, has parent group
\[\Gamma=\Gamma_{\mathfrak M}=\Delta[\infty, 2, \infty]\cong V_4*C_2.\]
This permutes the set $\Omega$ of incident vertex-edge-face flags of a map (equivalently, the faces of its barycentric subdivision), with each involution $R_i\;(i=0, 1, 2)$ changing the $i$-dimensional component of each flag (whenever possible) while preserving the other two.

\smallskip

\noindent{\bf 2.} The category $\mathfrak M^+$ of oriented maps, those in which the underlying surface is oriented and without boundary,  has parent group
\[\Gamma=\Gamma_{\mathfrak M^+}=\Delta(\infty, 2, \infty)\cong C_{\infty}*C_2.\]
This group permutes directed edges, with $X$ usng the local orientation to rotate them about their target vertices, and $Y$ reversing their direction, so that $Z$ rotates them around incident faces.

\smallskip

\noindent{\bf 3.} There are several ways of defining or representing hypermaps. For our purposes, the most convenient is the Walsh representation as a bipartite map~\cite{Wal}, in which the white and black vertices of the embedded graph correspond to the hypervertices and hyperedges of the hypermap, the edges correspond to incidences between them, and the faces correspond to its hyperfaces. The category $\mathfrak H$ of all hypermaps, where the underlying surface is unoriented and possibly with boundary, has parent group
\[\Gamma=\Gamma_{\mathfrak H}=\Delta[\infty, \infty, \infty]\cong C_2*C_2*C_2.\]
This permutes incident edge-face pairs of the bipartite map, with involutions $R_0$ and $R_1$ preserving the face and the incident black and white vertex respectively, while $R_2$ preserves the edge.

\smallskip

\noindent{\bf 4.}  The category $\mathfrak H^+$ of oriented hypermaps, those in which the underlying surface is oriented and without boundary, has parent group
\[\Gamma=\Gamma_{\mathfrak H^+}=\Delta(\infty, \infty, \infty)\cong C_{\infty}*C_{\infty}\cong F_2.\]
This permutes the edges of the embedded graph, with $X$ and $Y$ using the local orientation to rotate them around their incident white and black vertices, so that $Z$ rotates them around incident faces.

\smallskip

\noindent{\bf 5.} The category $\mathfrak D$ of dessins d'enfants can be identified with the subcategory of $\mathfrak H^+$ consisting of its finite objects, those in which the embedded bipartite graph is finite and the surface is compact. It has the same parent group $\Gamma=\Delta(\infty, \infty, \infty)\cong F_2$ as $\mathcal H^+$, permuting edges as before.

\smallskip

 If we wish to restrict any of these categories to the subcategory of objects of a particular type $(p,q,r)$, we replace the parent group given above with the corresponding triangle or extended triangle group of that type.

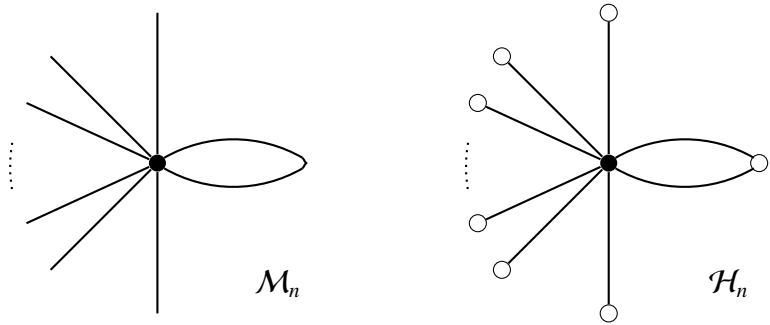
\begin{figure}[h!]
\begin{center}
\begin{tikzpicture}[scale=0.2, inner sep=0.8mm]

\node (A) at (0,0) [shape=circle, fill=black] {};
\draw [thick] (9.65,0.35) arc (60:120:9.2);
\draw [thick] (9.65,-0.35) arc (-60:-120:9.2);
\draw [thick] (9.65,0.35) to (9.75,0.2) to (9.8,0.1) to (9.9,0) to (9.8,-0.1) to (9.75,-0.2) to (9.65,-0.35);
\draw [thick] (0,10) to (A) to (0,-10);
\draw [thick] (-7.1,7.1) to (A) to (-7.1,-7.1);
\draw [thick] (-8.7,4) to (A) to (-8.7,-4);
\draw [thick, dotted] (-9.65,1.35) arc (170:190:9.2);

\node at (8,-8) {${\mathcal M}_n$};


\node (A') at (30,0) [shape=circle, fill=black] {};
\node (0') at (40,0) [shape=circle, draw] {};
\node (1') at (30,10) [shape=circle, draw] {};
\node (-1') at (30,-10) [shape=circle, draw] {};
\draw [thick] (39.65,0.35) arc (60:120:9.2);
\draw [thick] (39.65,-0.35) arc (-60:-120:9.2);
\node (2') at (22.9,7.1) [shape=circle, draw] {};
\node (-2') at (22.9,-7.1) [shape=circle, draw] {};
\node (3') at (21.3,4) [shape=circle, draw] {};
\node (-3') at (21.3,-4) [shape=circle, draw] {};
\draw [thick] (1') to (A') to (-1');
\draw [thick] (2') to (A') to (-2');
\draw [thick] (3') to (A') to (-3');
\draw [thick, dotted] (20.65,1.35) arc (170:190:9.2);

\node at (38,-8) {${\mathcal H}_n$};

\end{tikzpicture}
\end{center}
\caption{The map ${\mathcal M}_n$ and the hypermap ${\mathcal H}_n$} 
\label{MnHn}
\end{figure}

 \medskip

\begin{exam}
The planar map ${\mathcal M}_n\in{\mathfrak M}^+\;(n\ge 2)$, shown on the left in Figure~\ref{MnHn}, has one vertex, of valency $n$, incident with one loop and $n-2$ half edges. It corresponds to the epimorphism
 \[\theta:\Delta(\infty,2,\infty)\to S_n,\quad X\mapsto x=(1,2,\ldots, n),\; Y\mapsto y=(1,2),\; Z\mapsto z=(n, n-1, \ldots, 2).\]
This map can be regarded as a hypermap ${\mathcal H}_n$, shown on the right in Figure~\ref{MnHn}, by adding a white vertex to each edge; this corresponds to composing $\theta$ with the natural epimorphism $\Delta(\infty,\infty,\infty)\to\Delta(\infty,2,\infty)$. The hypermap ${\mathcal H}_n$ has type $(n,2,n-1)$, and it can be regarded as a member of the category of oriented hypermaps of this type by factoring $\theta$ through $\Delta(n,2,n-1)$. In all cases the monodromy group $G$ is $S_n$, in its natural representation, and the automorphism group is trivial. However, if we regard ${\mathcal M}_n$ or ${\mathcal H}_n$ as an unoriented map or hypermap, then its monodromy group is a permutation group of degree $2n$, and the automorphism group has order $2$, generated by the obvious reflection.
\end{exam}

\medskip

We briefly mention two other classes of permutational category in which Theorem~\ref{isothm} applies. The first concerns abstract polytopes~\cite{MS}, which can be regarded as higher-dimensional generalisations of maps. Those $n$-polytopes of a particular type, associated with the Schl\"afli symbol $\{p_1, \ldots, p_{n-1}\}$, can be regarded as transitive permutation representations of the Coxeter group $\Gamma$ with presentation
\[\langle R_0, \ldots, R_n\mid R_i^2=(R_{i-1}R_i)^{p_i}=(R_iR_j)^2=1\;(|i-j|>1)\rangle.\]
For instance, $\Gamma_{\mathfrak M}$ is associated with the Schl\"afli symbol $\{\infty,\infty\}$. However, in higher dimensions, not all transitive representations correspond to abstract polytopes, since the monodromy groups must satisfy the intersection property~\cite[Proposition~2B10]{MS}.

The second class of examples concerns covering spaces~\cite{Mas, Mun}. Under suitable connectedness assumptions, the (connected, unbranched) coverings $Y\to X$ of a topological space $X$ can be identified with the transitive permutation representations $\theta:\Gamma\to S={\rm Sym}(\Omega)$ of its fundamental group $\Gamma=\pi_1X$, acting by unique path-lifting on the fibre $\Omega$ over a base-point in $X$. In this case the automorphism group of an object $Y\to X$ in this category is the group of covering transformations, the centraliser in $S$ of the monodromy group $\theta(\Gamma)$ of the covering.


This last example helps to explain the importance of the fifth category listed above, the category $\mathfrak D$ of dessins d'enfants.
By Belyi's Theorem~\cite{Bel}, a compact Riemann surface $R$ is defined (as a projective algebraic curve) over the field $\overline{\mathbb Q}$ of algebraic numbers if and only if it admits a non-constant meromorphic function $\beta$ branched over at most three points of the complex projective line (the Riemann sphere) ${\mathbb P}^1({\mathbb C})=\hat{\mathbb C}={\mathbb C}\cup\{\infty\}$. Composing $\beta$ with a M\"obius transformation if necessary, we may assume that its critical values are contained in $\{0, 1, \infty\}$. Such {\em Belyi functions\/} $\beta$ correspond to unbranched finite coverings $R\setminus\beta^{-1}(\{0, 1, \infty\})\to X$ of the thrice-punctured sphere $X=\hat{\mathbb C}\setminus\{0, 1, \infty\}$, and hence to transitive finite permutation representations of its fundamental group $\Gamma=\pi_1X$; this is a free group of rank $2$, freely generated by the homotopy classes of small loops around $0$ and $1$. The unit interval $[0,1]\subset \hat{\mathbb C}$ lifts to a bipartite graph embedded in $R$, with white and black vertices over $0$ and $1$, and face-centres over $\infty$. Conversely, any finite oriented hypermap, after suitable uniformisation, yields a Riemann surface $R$ defined over $\overline{\mathbb Q}$; see~\cite{GG, Gro, JW, LZ} for details of these connections, and of the action of the absolute Galois group ${\rm Gal}(\overline{\mathbb Q}/{\mathbb Q})$ on dessins.



\section{Proof of Theorems~\ref{isothm} and~\ref{autothm}}\label{autoproof}

Let $\mathcal O$ be an object in a permutational category $\mathfrak C$, identified with a transitive permutation representation $\Gamma\to G\le S:={\rm Sym}(\Omega)$, so that its automorphism group ${\rm Aut}_{\mathfrak C}({\mathcal O})$ is identified with the centraliser $C_{S}(G)$ of $G$ in $S$. Then Theorem~\ref{isothm} asserts that 
\[{\rm Aut}_{\mathfrak C}({\mathcal O}) \cong N_G(H)/H\cong N_{\Gamma}(M)/M,\]
where $G$ is the monodromy group $\mathcal O$, and $H$ and $M$ are point-stabilisers in $G$ and $\Gamma$. The second isomorphism follows immediately from the first, and this in turn follows from part (1) of Theorem~\ref{autothm}, both parts of which we will now prove.

\begin{proof} The centraliser $C$ of $G$ acts semi-regularly (i.e.~freely) on $\Omega$. To see this, suppose that $c\in C$ and $\beta c=\beta$ for some $\beta\in\Omega$. Given any $\omega\in\Omega$, there is some $g\in G$ such that $\omega = \beta g$, since $G$ is transitive on $\Omega$. Then $\omega c = (\beta g)c = (\beta c)g = \beta g = \omega$. Thus $c = 1$, as required.

Let $\Phi=\{\beta\in\Omega \mid G_{\beta}=H\}$, so in particular $\alpha\in\Phi$. Then $C$ leaves $\Phi$ invariant, since if $\beta\in\Phi$ and $c\in C$ then for all $h\in H$ we have $(\beta c)h = (\beta h)c = \beta c$, so that $\beta c\in\Phi$.

Let us identify $\Omega$ with the set of cosets of $H$ in $G$ in the usual way, identifying each $\omega\in\Omega$ with the unique coset $Hx$ such that $x\in G$ and $\alpha x = \omega$. Thus $\alpha$ is identified with $H$ itself, and $G$ acts on the cosets by $g : Hx\mapsto Hxg$. 

Then $\Phi$ is identified with the set of cosets of $H$ in $N:=N_G(H)$. To see this, let $\omega\in\Omega$ correspond to a coset $Hx$ of $H$ in $G$. First suppose that $x=n\in N$. Then $(Hn)h=(nH)h=n(Hh)= nH=Hn$ for all $h\in H$, so the coset $Hn$ is fixed by $H$, giving $H\le G_{\omega}$, while if $g\in G_{\omega}$ then $Hng=Hn$, so $H=H^n=H^ng=Hg$ and hence $g\in H$, giving $G_{\omega}\le H$. Thus $G_{\omega}=H$ and hence $\omega\in\Phi$. Conversely, suppose that $\omega\in\Phi$. Then $G_{\omega}=H$, so $Hxg=Hx$ if and only if $g\in H$, that is, $H^xg=H^x$ if and only if $g\in H$, so $H^x=H$ and hence $x\in N$.

Let us define a new action of $N$ on $\Omega$ (now regarded as the set of cosets of $H$ in $G$) by $n : Hx\mapsto n^{-1}Hx=Hn^{-1}x$ for all $n\in N$ and $x\in G$. If $n_1, n_2\in N$ then $n_1$, followed by $n_2$, sends $Hx$ to $n_2^{-1}n_1^{-1}Hx = (n_1n_2)^{-1}Hx $, as does $n_1n_2$, so this is indeed a group action of $N$. It commutes with the action of $G$ on $\Omega$, since $n^{-1}(Hxg)=(n^{-1}Hx)g$ for all $n\in N$ and $x, g\in G$, so it defines a homomorphism $\theta:N\to C$. In particular, this action preserves $\Phi$, since $C$ does.

The induced action of $N$ on $\Phi$ is transitive, since if $n', n''\in N$ then the element $n=n'(n'')^{-1}\in N$ sends $Hn'$ to $n^{-1}Hn'=Hn^{-1}n'=Hn''$.  Thus $\theta(N)$ acts on $\Phi$ as a transitive subgroup of $C$. But $C$ acts semi-regularly on $\Phi$, so it has no transitive proper subgroups. Therefore $\theta$ is an epimorphism, and $C$ acts regularly on $\Phi$, giving (2).

In this action of $N$, we have $n^{-1}Hx=Hx$ if and only if $n\in H$, so the subgroup stabilising any coset $Hx$ is $H$, which is therefore the kernel $\ker(\theta)$ of this action of $N$. The First Isomorphism Theorem therefore gives $N/H\cong C$, so (1) is proved.
\end{proof}

\begin{rema}
The most symmetric objects in $\mathfrak C$ are the {\em regular\/} objects, those for which ${\rm Aut}_{\mathfrak C}({\mathcal O})$ acts transitively on $\Omega$. By Theorem~\ref{autothm}(2) this is equivalent to $\Phi=\Omega$, that is, to $H=1$, meaning that $G$ acts regularly on $\Omega$. This is also equivalent to $M$ being a normal subgroup of $\Gamma$. Then $G\cong C\cong \Gamma/M$, and $G$ and $C$ can be identified with the right and left regular representations of the same group. (In fact, if $G$ is abelian then $C=G$.)
\end{rema}


\section{An alternative form of Theorem~\ref{autothm}(2)}\label{alternative}

One sometimes finds part (2) of Theorem~\ref{autothm} stated in the following alternative form:

\smallskip

\indent\indent $(2')$ $C$ acts regularly on the set of fixed points of $H$ in $\Omega$.

\smallskip

\noindent This is equivalent to (2) in cases where $\Omega$ is finite, or more generally where $H$ is finite, so that an inclusion $H=H_{\alpha}\le H_{\beta}$ between conjugate subgroups is equivalent to their equality. However, $(2')$ can be false if $H$ is infinite, as shown by the following example.

\begin{exam}
Let $G$ be the Baumslag-Solitar group~\cite{BS}
\[G=BS(1,2)=\langle a, b\mid a^b=a^2\rangle.\]
This is a semidirect product $G = A\rtimes B$,
where $B=\langle b\rangle\cong C_{\infty}$,
and $A$ is the normal closure of $a$ in $G$, an abelian group of countably infinite rank, generated by the conjugates
\[a_i:=a^{2^i}=a^{b^i}\quad (i\in{\mathbb Z})\]
of $a$, with $a_i^2=a_{i+1}$ for all $i\in{\mathbb Z}$. This subgroup $A$ can be identified with the additive group of the ring ${\mathbb Z}[\frac{1}{2}]$, with each element $\prod_ia_i^{e_i}$ corresponding to $\sum_ie_i2^i\in{\mathbb Z}[\frac{1}{2}]$.
Thus $a=a_0$ corresponds to the element $1\in{\mathbb Z}[\frac{1}{2}]$, and $b$, acting by conjugation on $A$ as the automorphism $a_i\mapsto a_{i+1}$, acts on ${\mathbb Z}[\frac{1}{2}]$ by $t\mapsto 2t$. In particular, the subgroup $H:=\langle a\rangle$ has conjugate subgroups $H_i:=H^{b^{-i}}=\langle a^{2^{-i}}\rangle$ for all $i\in{\mathbb Z}$, with a chain of index $2$ inclusions
\[\cdots <H_{-2}<H_{-1}<H_0\;(=H)<H_1<H_2<\cdots.\]


In order to represent $G$ on the cosets of this subgroup $H$, first let $G$ act on the disjoint union
\[\Omega=\bigcup_{i\in{\mathbb Z}}\Omega_i\]
of countably many copies $\Omega_i$ of the Sylow $2$-subgroup
\[{\mathbb T}:={\rm Syl}_2({\mathbb Q}/{\mathbb Z})={\mathbb Z}[\hbox{$\frac{1}{2}$}]/{\mathbb Z}\cong C_{2^{\infty}}:=\bigcup_{e\ge 0}C_{2^e}\]
of ${\mathbb Q}/{\mathbb Z}$ as follows. Let the generator $a$ of $G$ act on each $\Omega_i$ by
\[a: t\mapsto t+2^i \; {\rm mod}\,{\mathbb Z}\]
for $t\in{\mathbb Z}[\frac{1}{2}]$, so in particular $a$ fixes $\Omega_i$ for each $i\ge 0$, and has cycles of length $2^{-i}$ on $\Omega_i$ for each $i<0$. Let the generator $b$ act on $\Omega$ by a translation $i\mapsto i-1$ of subscripts, so that it sends each element $t_i=t\in{\mathbb T}$ of $\Omega_i$ to the corresponding element $t_{i-1}=t$ of $\Omega_{i-1}$. Then the element $a^b=b^{-1}ab$ acts on each $\Omega_i$ by a composition of three permutations
\[a^b : t=t_i\mapsto t_{i+1}\mapsto t_{i+1}+2^{i+1}\mapsto t_i+2^{i+1} =  t+2^{i+1}\; {\rm mod}\,{\mathbb Z},\]
which has the same effect as
\[a^2 : t\mapsto t+2^{i+1}\; {\rm mod}\,{\mathbb Z},\]
so we have a group action of $G$ on $\Omega$.

It is easy to see that $G$ acts faithfully and transitively on $\Omega$, that the stabiliser of the element $\alpha=0_0\in\Omega_0$ is $H=\langle a\rangle$, and that $N_G(H)=A$, so that
\[N_G(H)/H=A/H\cong{\mathbb T}.\]

We now calculate
\[C:=C_S(G)=C_S(A)\cap C_S(B),\]
where $S:={\rm Sym}(\Omega)$. First note that $C_S(A)$ must permute the orbits of $A$, which are the sets $\Omega_i$, and must do so trivially since $A$ has a different representation, with kernel $H_i=\langle a^{2^{-i}}\rangle$, on each $\Omega_i$. Since $A$ induces the regular representation of the abelian group $A/H_i$ on $\Omega_i$, it follows that $C_S(A)$ must be the cartesian product $\prod_{i\in{\mathbb Z}}A/H_i$ of the groups $A/H_i$, each factor $A/H_i$ acting regularly on $\Omega_i$ and fixing $\Omega\setminus\Omega_i$. Even though the subgroups $H_i$ are all distinct, we have $A/H_i\cong{\mathbb T}$ for all $i\in{\mathbb Z}$, so $C_S(A) \cong{\mathbb T}^{\mathbb Z}$. The only elements of $C_S(A)$ commuting with the subscript shift $b$ are those corresponding to elements of the diagonal subgroup of ${\mathbb T}^{\mathbb Z}$, inducing the same permutation on each subset $\Omega_i$. These form a group isomorphic to $\mathbb T$, proving that
\[C\cong N_G(H)/H.\]

Thus $G$ satisfies Theorem~\ref{autothm}(1), but what about statements (2) and $(2')$? In this example, the orbits of $C$ are the sets $\Omega_i$, each permuted regularly by $C$, while the subset of $\Omega$ fixed by $H$ is the disjoint union
\[\bigcup_{i\ge 0}\Omega_i\]
of infinitely many of these orbits. Thus $G$ does not satisfy statement $(2')$, though it does satisfy $(2)$ since the points with stabiliser $H$ are those in $\Omega_0$, forming a regular orbit of $C$; the points in $\Omega_i$ for $i>0$, although they are also fixed by $H$, have stabiliser $H_i$ properly containing $H$.
\end{exam}

\begin{rema}
In this example, one can regard $G$ as the monodromy group of an infinite oriented hypermap $\mathcal H$, given by the epimorphism $\theta: F_2\to G, X\mapsto a, Y\mapsto b$. Then ${\rm Aut}_{\mathfrak H^+}({\mathcal H})\cong C\cong{\mathbb T}$.
\end{rema}

\begin{rema}
There is an obvious generalisation of this example to the Baumslag-Solitar group $G=BS(1,q)=\langle a, b\mid a^b=a^q\rangle$ for an arbitrary integer $q\ne 0, \pm 1$. See~\cite{Jon08} for a discussion of the oriented hypermaps associated with the Baumslag-Solitar groups $BS(p,q)=\langle a, b\mid (a^p)^b=a^q\rangle$ for arbitrary $p, q\ne 0$.
\end{rema}


\section{Primitive monodromy groups}\label{primitive}

If $G$ is a permutation group on a set $\Omega$, then the relation $G_{\alpha}=G_{\beta}$, appearing in Theorem~\ref{autothm} via the definition of $\Phi$, is a $G$-invariant equivalence relation on $\Omega$, and its equivalence classes are the orbits of the centraliser $C$ of $G$. Recall that a permutation group is {\em primitive\/} if it preserves no non-trivial equivalence relation; equivalently, the point-stabilisers are maximal subgroups. As an immediate consequence of Theorem~\ref{autothm}, we have:

\begin{cor}
If $G$ is a primitive permutation group, then either $G\cong C_p$, acting regularly, for some prime $p$, with centraliser $C=G$, or the centraliser $C$ of $G$ is the trivial group.
\end{cor}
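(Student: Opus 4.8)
The plan is to extract this corollary directly from Theorem~\ref{autothm} by analysing what primitivity forces on the relevant subgroup lattice. The key observation is the one flagged in the paragraph preceding the statement: the relation $G_\alpha = G_\beta$ is a $G$-invariant equivalence relation on $\Omega$, and its classes are exactly the orbits of the centraliser $C$ (equivalently, the sets $\Phi$ appearing in Theorem~\ref{autothm}(2), translated across $\Omega$). First I would invoke primitivity to conclude that this equivalence relation must be trivial: either every class is a singleton, or there is a single class equal to all of $\Omega$.

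In the first case, each point $\beta$ has a distinct stabiliser, so the set $\Phi$ of points with stabiliser $H=G_\alpha$ is just $\{\alpha\}$. By Theorem~\ref{autothm}(2), $C$ acts regularly on $\Phi$, so $|C| = |\Phi| = 1$ and $C$ is trivial. In the second case, the relation is all of $\Omega\times\Omega$, meaning $G_\beta = H$ for every $\beta\in\Omega$, i.e.\ $\Phi = \Omega$. Then $H$ fixes every point and, since $G$ is faithful as a subgroup of $S$, we get $H=1$, so $G$ acts regularly. Now I would use primitivity a second time, in its characterisation via maximal point-stabilisers: since $H=1$ is maximal in $G$, the group $G$ has no proper nontrivial subgroups, which forces $G\cong C_p$ for some prime $p$. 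In this regular case Theorem~\ref{autothm}(1) gives $C\cong N_G(H)/H = N_G(1)/1 = G$, so $C\cong G\cong C_p$; one can note additionally that, $G$ being abelian, $C=G$ as remarked earlier.

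I expect the genuinely substantive content to be carried entirely by Theorem~\ref{autothm}, so the only care needed is in the translation between the two faces of primitivity (no nontrivial invariant equivalence relation, versus maximality of point-stabilisers) and in checking the degenerate case $|\Omega|=1$, where $G=C=1$ is trivial and should be subsumed under the ``$C$ trivial'' alternative rather than treated as a spurious $C_p$. The main obstacle, such as it is, lies in the regular case: one must be slightly careful that the corollary is stated without a finiteness hypothesis on $G$, yet a primitive group with trivial point-stabiliser and no proper nontrivial subgroups is automatically finite of prime order, since an infinite group or a finite group of composite order would possess a proper nontrivial subgroup violating maximality of $H=1$. Making this deduction explicit, rather than implicitly assuming finiteness, is the one place where the argument must be stated with precision.
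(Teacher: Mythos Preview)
Your proposal is correct and follows essentially the same argument as the paper's own proof: both use primitivity to force the equivalence relation $G_\alpha=G_\beta$ to be either the identity or universal, and then deduce $|C|=1$ or $H=1$ (hence $G\cong C_p$ with $C=G$) in the respective cases. Your version is simply more explicit about the intermediate steps (faithfulness giving $H=1$, the automatic finiteness in the regular case, and the degenerate $|\Omega|=1$ situation), all of which the paper leaves implicit.
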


\begin{proof}
The equivalence relation $G_{\alpha}=G_{\beta}$ on $\Omega$ must be either the identity or the universal relation. In the first case the equivalence classes are singletons, so $|C|=1$. In the second case $G_{\alpha}=\{1\}$; this is a maximal subgroup of $G$, so $G\cong C_p$ for some prime $p$, with $C=G$.
\end{proof}

\begin{cor}
In a permutational category $\mathfrak C$, if the monodromy group $G$ of an object $\mathcal O$ is a primitive permutation group, then either $\mathcal O$ is regular, with ${\rm Aut}_{\mathfrak C}({\mathcal O})=G\cong C_p$ for some prime $p$, or ${\rm Aut}_{\mathfrak C}({\mathcal O})$ is the trivial group.
\end{cor}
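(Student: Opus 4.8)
The plan is to reduce this statement to the preceding corollary by unwinding the dictionary of a permutational category. First I would recall that, by the definition reviewed in Section~\ref{permcats}, the automorphism group ${\rm Aut}_{\mathfrak C}({\mathcal O})$ is by construction the centraliser $C=C_S(G)$ of the monodromy group $G$ in $S={\rm Sym}(\Omega)$. Since a primitive permutation group is in particular transitive, $G$ is transitive on $\Omega$, so $\mathcal O$ is connected and the identification of ${\rm Aut}_{\mathfrak C}({\mathcal O})$ with $C$, together with Theorem~\ref{isothm}, is available.

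Next I would simply invoke the previous corollary, which dichotomises the centraliser $C$ of a primitive group. In one case $C$ is the trivial group, whence ${\rm Aut}_{\mathfrak C}({\mathcal O})$ is trivial and there is nothing more to check. In the other case $G\cong C_p$ for some prime $p$, acting regularly, with $C=G$. Here the remaining work is to translate the condition that $G$ acts regularly into the language of objects: by the Remark following the proof of Theorem~\ref{autothm}, regularity of $G$ on $\Omega$ (equivalently $H=1$) is precisely the condition that $\mathcal O$ be a \emph{regular} object, that is, that its automorphism group act transitively on $\Omega$. Combining $C=G$ with the identification ${\rm Aut}_{\mathfrak C}({\mathcal O})=C$ then yields ${\rm Aut}_{\mathfrak C}({\mathcal O})=G\cong C_p$, as required.

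I do not expect a genuine obstacle: the substantive content lies entirely in the preceding corollary, and what remains is bookkeeping to align the two vocabularies (centraliser versus automorphism group, regular permutation group versus regular object). The one point deserving a little care is that the conclusion asserts an equality ${\rm Aut}_{\mathfrak C}({\mathcal O})=G$ rather than a mere isomorphism; this is legitimate because ${\rm Aut}_{\mathfrak C}({\mathcal O})$ is \emph{defined} to be $C_S(G)$, and the previous corollary — or, equivalently, the observation in the Remark that an abelian regular group is self-centralising — gives $C_S(G)=G$ on the nose in the prime-cyclic case.
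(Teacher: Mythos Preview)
Your proposal is correct and mirrors the paper's approach: the paper states this corollary without proof, treating it as an immediate consequence of the preceding corollary together with the identification ${\rm Aut}_{\mathfrak C}({\mathcal O})=C_S(G)$ and the Remark characterising regular objects. Your write-up simply makes explicit the bookkeeping the paper leaves to the reader.
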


Of course, there are many examples of primitive permutation groups, both sporadic and members of infinite families: just represent a group on the cosets of a maximal subgroup. On the other hand, Cameron, Neumann and Teague~\cite{CNT} have shown that, for a set of integers $n$ of asymptotic density $1$, the only primitive groups of degree $n$ are the alternating and symmetric groups $A_n$ and $S_n$.

\begin{exam}
 The symmetric and alternating groups, in their natural representationas, arise quite frequently as monodromy groups in various categories. For instance, let $\mathfrak C=\mathfrak H^+$, the category of oriented hypermaps, with parent group $\Gamma=F_2=\langle X, Y\mid -\rangle$. A theorem of Dixon~\cite{Dix} states that a randomly chosen pair of permutations $x, y\in S_n$ generate $S_n$ or $A_n$ with probability approaching $3/4$ or $1/4$ as $n\to\infty$, so in that sense `most' finite objects in this category have a symmetric or alternating monodromy group, and a trivial automorphism group.
\end{exam}

In most of the permutational categories of current interest, it is simple to describe the regular objects with automorphism group $C_p$ for each prime $p$; apart from these exceptions, objects with a primitive monodromy group have a trivial automorphism group. The exceptions correspond to the normal subgroups of index $p$ in the parent group $\Gamma$, or equivalently to the maximal subgroups in the elementary abelian $p$-group $\Gamma/\Gamma'\Gamma^p$, where $\Gamma'$ and $\Gamma^P$ are the subgroups of $\Gamma$ generated by the commutators and $p$-th powers. In the categories listed earlier, they are as follows.

If ${\mathfrak C}={\mathfrak H}^+$ or $\mathfrak D$ then $\Gamma=F_2$, so $\Gamma/\Gamma'\Gamma^p\cong C_p\times C_p$, with $p+1$ maximal subgroups. Of the corresponding oriented hypermaps, three have type a permutation of $(p,p,1)$ and are planar, while the remaining $p-2$ have type $(p,p,p)$ and  genus $(p-1)/2$. As dessins, the former are on the Riemann sphere, with Belyi functions $\beta: z\mapsto z^p$, $1/(1-z^p)$ and $1-z^{-p}$, and automorphisms $z\mapsto \zeta z$ where $\zeta^p=1$. The latter are on Lefschetz curves $y^p=x^u(x-1)$ for $u=1,\ldots, p-2$, each with a Belyi function $\beta:(x,y)\mapsto x$ and automorphisms $(x,y)\mapsto (x,\zeta y)$ where $\zeta^p=1$ (see~\cite[Example 5.6]{JW}). The four dessins for $p=3$ are shown in Figure~\ref{p=3}; in the dessin on the right, opposite sides of the hexagon are identified to form a torus.

\begin{figure}[h!]
\begin{center}
\begin{tikzpicture}[scale=0.2, inner sep=0.8mm]

\node (a) at (-25,-4) [shape=circle, draw] {};
\node (b) at (-25,4) [shape=circle, fill=black] {};
\draw [thick] (a) to (b);
\draw [thick] (-24.8,-3.6) arc (-45:45:5.2);
\draw [thick] (-25.2,3.6) arc (135:223:5.2);


\node (c) at (-10,0) [shape=circle, draw] {};
\node (d) at (-5,0) [shape=circle, fill=black] {};
\node (e) at (-12.5,4) [shape=circle, fill=black] {};
\node (f) at (-12.5,-4) [shape=circle, fill=black] {};
\draw [thick] (c) to (d);
\draw [thick] (c) to (e);
\draw [thick] (c) to (f);


\node (g) at (10,0) [shape=circle, fill=black] {};
\node (h) at (15,0) [shape=circle, draw] {};
\node (i) at (7.5,4) [shape=circle, draw] {};
\node (j) at (7.5,-4) [shape=circle, draw] {};
\draw [thick] (g) to (h);
\draw [thick] (g) to (i);
\draw [thick] (g) to (j);


\node (k) at (35,0) [shape=circle, draw] {};
\node (l) at (32.5,4) [shape=circle, fill=black] {};
\node (m) at (27.5,4) [shape=circle, draw] {};
\node (n) at (25,0) [shape=circle, fill=black] {};
\node (o) at (27.5,-4) [shape=circle, draw] {};
\node (p) at (32.5,-4) [shape=circle, fill=black] {};
\draw [thick] (k) to (l) to (m) to (n) to (o) to (p) to (k);

\end{tikzpicture}
\end{center}
\caption{The four dessins with primitive monodromy group $C_p$, $p=3$} 
\label{p=3}
\end{figure}
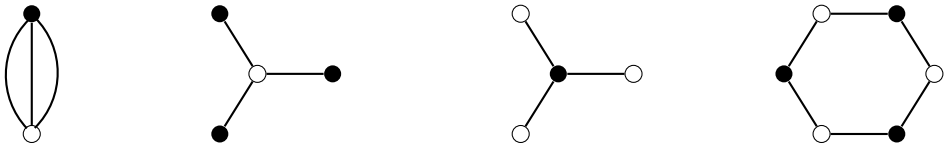

If ${\mathfrak C}={\mathfrak M}^+$ then $\Gamma=C_{\infty}*C_2$, so $\Gamma/\Gamma'\Gamma^p\cong V_4$ or $C_p$ as $p=2$ or $p>2$, giving three oriented maps or one, all planar. Their types are the three permutations of $(2,2,1)$, together with $(p,1,p)$. They are shown, for $p=2$ and $3$, in Figure~\ref{p=2,3}.

\begin{figure}[h!]
\begin{center}
\begin{tikzpicture}[scale=0.2, inner sep=0.8mm]

\node (a) at (-25,0) [shape=circle, fill=black] {};
\draw [thick] (-25,0) arc (0:360:3);


\node (b) at (-10,0) [shape=circle, fill=black] {};
\draw [thick] (-15,0) to (b) to (-5,0);


\node (c) at (5,0) [shape=circle, fill=black] {};
\node (d) at (15,0) [shape=circle, fill=black] {};
\draw [thick] (c) to (d);


\node (c) at (27.5,0) [shape=circle, fill=black] {};
\draw [thick] (c) to (32.5,0);
\draw [thick] (c) to (25,4);
\draw [thick] (c) to (25,-4);

\end{tikzpicture}
\end{center}
\caption{The four oriented maps with primitive monodromy group $C_p$, $p=2, 3$} 
\label{p=2,3}
\end{figure}
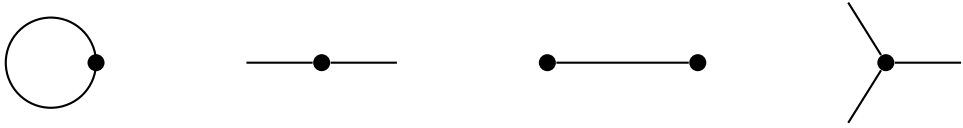

\begin{figure}[h!]
\begin{center}
\begin{tikzpicture}[scale=0.2, inner sep=0.8mm]

\node (a) at (-25,0) [shape=circle, draw] {};
\node (b) at (-20,0) [shape=circle, fill=black] {};
\draw [thick] (a) to (b);


\draw [thick, dashed] (0,5.4) arc (10:350:3);
\node (c) at (0,5) [shape=circle, draw] {};
\node (d) at (-6,5) [shape=circle, fill=black] {};
\draw [thick] (c) to (d);


\node (e) at (0,-5) [shape=circle, draw] {};
\node (f) at (-6,-5) [shape=circle, fill=black] {};
\draw [thick] (-0.2,-4.5) arc (10:350:2.8);


\draw [thick, dashed] (15,5.4) arc (10:350:3);
\node (g) at (15,5) [shape=circle, draw] {};
\node (h) at (12,5) [shape=circle, fill=black] {};
\draw [thick] (g) to (h);


\draw [thick, dashed] (15,-4.6) arc (10:270:3);
\node (i) at (15,-5) [shape=circle, draw] {};
\node (j) at (12,-2) [shape=circle, fill=black] {};
\node (k) at (12,-8) [shape=circle, fill=black] {};
\draw [thick] (15,-4.6) arc (10:90:3);
\draw [thick] (12,-8) arc (-90:-10:3);


\draw [thick, dashed] (30,5) arc (0:360:3);
\node (l) at (30,5) [shape=circle, fill=black] {};
\node (m) at (27,5) [shape=circle, draw] {};
\draw [thick] (l) to (m);


\draw [thick, dashed] (26.5,-2.1) arc (95:265:3);
\node (n) at (30,-5) [shape=circle, fill=black] {};
\node (o) at (27,-2) [shape=circle, draw] {};
\node (p) at (27,-8) [shape=circle, draw] {};
\draw [thick] (30,-4.6) arc (10:83:3);
\draw [thick] (27.5,-8) arc (-83:-10:3);

\end{tikzpicture}
\end{center}
\caption{The seven hypermaps with primitive monodromy group $C_2$} 
\label{7hypermaps}
\end{figure}
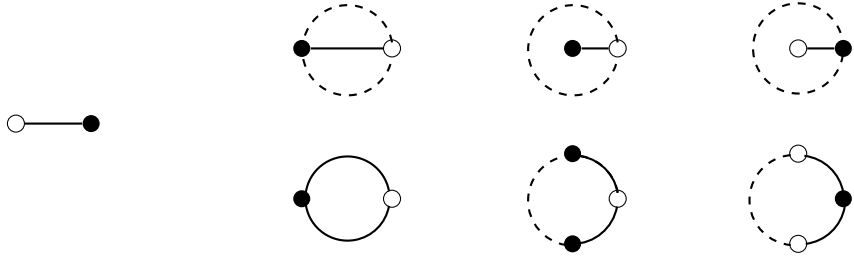

If ${\mathfrak C}={\mathfrak H}$ or $\mathfrak M$ then $\Gamma=C_2*C_2*C_2$ or $V_4*C_2$, so in either case $\Gamma/\Gamma'\Gamma^p\cong V_8$ or $1$ as $p=2$ or $p>2$. If $p=2$ there are seven hypermaps and seven maps; if $p>2$ there are none. The hypermaps are shown in Figure~\ref{7hypermaps}. The hypermap on the left is planar, of type $(2,2,2)$. The other six are on the closed disc, shown by a broken line; those in the first, second and third columns have type $(2,2,1)$, $(2,1,2)$ and $(1,2,2)$ respectively. The seven maps can be obtained from these hypermaps by ignoring all vertices of one particular colour.

If $X$ is a compact orientable surface of genus $g$ then there are $(p^{2g}-1)/(p-1)$ regular coverings $Y\to X$ with monodromy group $C_p$, corresponding to the normal subgroups of index $p$ in $\pi_1X=\langle A_i, B_i\;(i=1,\ldots, g)\mid \prod_i[A_i,B_i]=1\rangle$. In the non-orientable case, with $\pi_1X=\langle R_i\;(i=1,\ldots, g)\mid \prod_i R_i^2=1\rangle$ there are $(p^{g-1}-1)/(p-1)$ or $2^g-1$ such coverings as $p>2$ or $p=2$.


\section{Automorphism groups of non-connected objects}\label{noncon}

In most applications, it is traditional and convenient to restrict attention to the connected objects in a category, as we have done so far. Here we will briefly show how Theorem~\ref{isothm} extends to objects which are not connected.

If $\mathcal C$ is a permutational category with parent group $\Gamma$, then the connected components ${\mathcal O}_i\;(i\in I)$ of an object ${\mathcal O}$ in $\mathfrak C$ correspond bijectively to the orbits $\Omega_i\;(i\in I)$ of $\Gamma$ on the set $\Omega$ associated with $\mathcal O$. As before, ${\rm Aut}_{\mathfrak C}({\mathcal O)}$ is isomorphic to the centraliser $C$ of $\Gamma$ in $S={\rm Sym}(\Omega)$. In order to describe the structure of $C$ in general, we first consider two extreme cases.

Suppose first that the components $\mathcal O_i$ are mutually non-isomorphic. This is equivalent to the point stabilisers $\Delta_i\le\Gamma$ for different orbits $\Omega_i$ being mutually non-conjugate in $\Gamma$. Then $C$ is the cartesian product of the centralisers $C_i\le{\rm Sym}(\Omega_i)$ of $\Gamma$ on the sets $\Omega_i$. By the transitivity of $\Gamma$ on $\Omega_i$, we have $C_i\cong N_{G_i}(H_i)\cong N_{\Gamma}(\Delta_i)/\Delta_i$ for each $i\in I$, where $G_i$ is the permutation group induced by $\Gamma$ on $\Omega_i$, and $H_i$ is a point stabiliser in $G_i$ for this action. 

At the other extreme, suppose that the components $\mathcal O_i$ are all isomorphic, or equivalently the point stabilisers $\Delta_i$ are conjugate to each other. Then $C$ is the wreath product $C_i\wr {\rm Sym}(I)$ of $C_i$ by ${\rm Sym}(I)$. This is a semidirect product, in which the normal subgroup is the cartesian product of the mutually isomorphic groups $C_i\;(\cong N_{G_i}(H_i)\cong N_{\Gamma}(\Delta_i)/\Delta_i)$ for $i\in I$, and the complement is ${\rm Sym}(I)$, acting on this normal subgroup by permuting the factors $C_i$ via isomorphisms between them.

We can now describe the general form of $C$ by combining these two constructions. We first partition the set of components of $\mathcal O$ into maximal sets $\{\mathcal O_{ij}|i\in I_j\}\;(j\in J)$ of mutually isomorphic objects $\mathcal O_{ij}$, each subset indexed by a set $I_j$. We then define $C_{ij}$ $(\cong N_{G_{ij}}(H_{ij})\cong N_{\Gamma}(\Delta_{ij})/\Delta_{ij}$ with obvious notation) to be the centraliser of $\Gamma$ in ${\rm Sym}(\Omega_{ij})$. Then $C$, and hence also ${\rm Aut}_{\mathfrak C}({\mathcal O})$, is the cartesian product over all $j\in J$ of the wreath products $C_{ij}\wr {\rm Sym}(I_j)$ where $i\in I_j$. This is again a semidirect product, where the normal subgroup is the cartesian product of all the groups $C_{ij}\;(i\in I_j, j\in J)$, and the complement is the cartesian product of the groups ${\rm Sym}(I_j)\;(j\in J)$, each factor ${\rm Sym}(I_j)$ of the latter acting on the normal subgroup by permuting the factors $C_{ij}$ for $i\in I_j$ while fixing all other factors.

This description can be used to determine the cardinality of ${\rm Aut}_{\mathfrak C}({\mathcal O})$. We will restrict attention to categories for which the parent group $\Gamma$ is countable (for instance, if it is finitely generated), since this condition is satisfied by most of the examples studied; the modifications required for an uncountable parent group are straightforward. By  Theorem~\ref{isothm} this implies that ${\rm Aut}_{\mathfrak C}({\mathcal O})$ is also countable for each connected object $\mathcal O$. Recall that a cartesian product of infinitely many non-trivial groups is uncountable, as is the symmetric group on any infinite set. Then we have the following:

\begin{theo}
Let $\mathfrak C$ be a permutational category for which the parent group $\Gamma$ is countable, and let $\mathcal O$ be an object in $\mathfrak C$ with connected components ${\mathcal O}_{ij}$, indexed by sets $I_j\;(j\in J)$ as above. Then
\begin{enumerate}
\item $|{\rm Aut}_{\mathfrak C}({\mathcal O})|>{\aleph_0}$ if and only if either $\mathcal O$ has infinitely many components ${\mathcal O}_{ij}$ with $|{\rm Aut}_{\mathfrak C}({\mathcal O}_{ij})|>1$, or at least one set $I_j$ is infinite;
\item $|{\rm Aut}_{\mathfrak C}({\mathcal O})|=\aleph_0$ if and only if $\mathcal O$ has only finitely many components ${\mathcal O}_{ij}$ with $|{\rm Aut}_{\mathfrak C}({\mathcal O}_{ij})|>1$, each set $I_j$ is finite, and ${\rm Aut}_{\mathfrak C}(\mathcal O_{ij})$ is infinite for some component ${\mathcal O}_{ij}$;
\item $|{\rm Aut}_{\mathfrak C}({\mathcal O})|<\aleph_0$ if and only if $\mathcal O$ has only finitely many components ${\mathcal O}_{ij}$ with $|{\rm Aut}_{\mathfrak C}({\mathcal O}_{ij})|>1$, each set $I_j$ is finite, and ${\rm Aut}_{\mathfrak C}(\mathcal O_{ij})$ is finite for each component ${\mathcal O}_{ij}$.
\end{enumerate}
\end{theo}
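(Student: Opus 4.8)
The plan is to read the three cardinality statements directly off the explicit description of $C={\rm Aut}_{\mathfrak C}({\mathcal O})$ established just above, namely the semidirect product
\[C=\prod_{j\in J}\bigl(C_{ij}\wr{\rm Sym}(I_j)\bigr),\]
whose underlying set is the Cartesian product of its base group $N=\prod_{j\in J}\prod_{i\in I_j}C_{ij}$ with its top group $Q=\prod_{j\in J}{\rm Sym}(I_j)$. Since the cardinality of a semidirect product is the product of the cardinalities of its two constituents, $|C|=|N|\cdot|Q|$; as this product is finite exactly when both factors are finite and otherwise equals $\max(|N|,|Q|)$, the whole problem reduces to computing $|N|$ and $|Q|$ separately and recombining.

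To compute $|N|$, I would recall from Theorem~\ref{isothm} and the countability of $\Gamma$ that each factor $C_{ij}\cong N_{\Gamma}(\Delta_{ij})/\Delta_{ij}$ is countable. If only finitely many of the $C_{ij}$ are non-trivial, the trivial factors may be discarded and $N$ is a finite product of countable groups, hence countable; it is then finite precisely when each non-trivial $C_{ij}$ is finite. If instead infinitely many $C_{ij}$ are non-trivial---equivalently, infinitely many components $\mathcal O_{ij}$ satisfy $|{\rm Aut}_{\mathfrak C}({\mathcal O}_{ij})|>1$---then, by the recalled fact that a Cartesian product of infinitely many non-trivial groups is uncountable, $|N|>\aleph_0$. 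This pins down $|N|$ in each of the three ranges.

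To compute $|Q|$, I would note that ${\rm Sym}(I_j)$ is trivial when $|I_j|=1$, finite of order $|I_j|!$ when $I_j$ is finite, and uncountable when $I_j$ is infinite, the symmetric group on an infinite set being uncountable. Hence $|Q|>\aleph_0$ as soon as some $I_j$ is infinite, while if every $I_j$ is finite then $Q$ is a product of finite groups and so is countable, being finite precisely when only finitely many of these factors are non-trivial.

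Finally I would assemble the three statements through $|C|=|N|\cdot|Q|$: statement (1) holds because $|C|>\aleph_0$ iff $|N|>\aleph_0$ or $|Q|>\aleph_0$, statement (3) because $|C|<\aleph_0$ iff both $N$ and $Q$ are finite, and statement (2) is the intermediate case where $C$ is countably infinite. The step I expect to require the most care is the bookkeeping for the top group $Q$: one must keep track of the two distinct ways in which $Q$ can fail to be finite---a single infinite class $I_j$ on the one hand, and infinitely many non-singleton finite classes on the other---and then check that the combined conditions line up with the clauses ``at least one set $I_j$ is infinite'' and ``each set $I_j$ is finite'' as stated. The remaining ingredients (the order of a semidirect product and the two uncountability facts quoted in the text) are routine.
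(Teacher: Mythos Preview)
Your approach is exactly the one the paper intends: the paper does not give a separate proof of this theorem, but simply states it immediately after recording the structural description $C\cong\prod_{j\in J}\bigl(C_{ij}\wr{\rm Sym}(I_j)\bigr)$ together with the two uncountability facts (infinite cartesian products of non-trivial groups, and symmetric groups on infinite sets). Your decomposition $|C|=|N|\cdot|Q|$ and your analysis of $|N|$ via Theorem~\ref{isothm} are the right way to carry this out.

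However, the step you yourself single out as ``requiring the most care'' does not in fact go through. You correctly observe that $Q=\prod_{j\in J}{\rm Sym}(I_j)$ can be uncountable in two different ways: either some $I_j$ is infinite, or all $I_j$ are finite but infinitely many have $|I_j|\ge 2$. Only the first of these is captured by the clause ``at least one set $I_j$ is infinite'' in~(1), and the second is not absorbed by the other clause either. Concretely, take $\mathcal O$ to be the disjoint union over $n\ge 3$ of two copies of the planar map $\mathcal M_n$ of Example~3: then every component has trivial automorphism group, every $I_j$ has size~$2$, yet $C\cong\prod_{n\ge 3}S_2$ is uncountable. This object satisfies all three hypotheses of~(3) while $|C|=2^{\aleph_0}$, and satisfies neither hypothesis of~(1). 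So your plan to ``check that the combined conditions line up'' will reveal a genuine omission in the statement as printed; a correct version needs an extra clause (for instance, ``infinitely many $I_j$ with $|I_j|\ge 2$'' added to~(1), and ``only finitely many $I_j$ with $|I_j|\ge 2$'' added to~(2) and~(3)). Your method is sound; it is the target statement that needs repair.
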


\begin{cor}
In Case~(3), when ${\rm Aut}_{\mathfrak C}({\mathcal O})$ is finite, it has order
\[\prod_{j\in J} |{\rm Aut}_{\mathfrak C}(\mathcal O_{ij})|^{|I_j|}|I_j|!\,.\]
\end{cor}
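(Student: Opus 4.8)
The plan is to read off the order of $\operatorname{Aut}_{\mathfrak C}(\mathcal O)$ directly from the explicit decomposition of the centraliser $C$ established in the preceding structural discussion, which identifies $C$ with the cartesian product over $j\in J$ of the wreath products $C_{ij}\wr\operatorname{Sym}(I_j)$ (for $i\in I_j$). Since we are in Case (3), all the hypotheses there guarantee that the product is a genuine \emph{finite} group: only finitely many components have nontrivial automorphism group, each indexing set $I_j$ is finite, and each $\operatorname{Aut}_{\mathfrak C}(\mathcal O_{ij})\cong C_{ij}$ is finite. So the whole object is a finite group and its order is simply the product of the orders of its finitely many nontrivial constituent pieces.

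The key computation is the order of a single wreath product factor $C_{ij}\wr\operatorname{Sym}(I_j)$. Because this is the standard (unrestricted, but here finite) wreath product $B\wr\operatorname{Sym}(I_j)$ with base group the cartesian product of $|I_j|$ copies of $B=C_{ij}$ and top group $\operatorname{Sym}(I_j)$, its order is
\[
|C_{ij}\wr\operatorname{Sym}(I_j)| \;=\; |C_{ij}|^{|I_j|}\,|I_j|!\,.
\]
Here I use that all factors in a fixed block $j$ are mutually isomorphic (that is precisely how the blocks were defined), so $|C_{ij}|$ is well defined and independent of the representative $i\in I_j$, and that for a semidirect product the order is the product of the orders of the normal subgroup and the complement. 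Taking the cartesian product over $j\in J$ and invoking the identification $C_{ij}\cong\operatorname{Aut}_{\mathfrak C}(\mathcal O_{ij})$ from Theorem~\ref{isothm}, one obtains
\[
|\operatorname{Aut}_{\mathfrak C}(\mathcal O)| \;=\; \prod_{j\in J}|\operatorname{Aut}_{\mathfrak C}(\mathcal O_{ij})|^{|I_j|}\,|I_j|!\,,
\]
as claimed.

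The only point requiring a moment's care is the behaviour of the product over $j\in J$: the set $J$ itself need not be finite, so I would note that for all but finitely many $j$ the block consists of components with trivial automorphism group, i.e.\ $|\operatorname{Aut}_{\mathfrak C}(\mathcal O_{ij})|=1$, and moreover $|I_j|=1$ for such blocks (a nontrivial $I_j$ would force an infinite or otherwise disallowed contribution under the Case (3) hypotheses, or at worst contribute $|I_j|!$). Hence all but finitely many factors in the product equal $1$, the product is really finite, and it agrees with the finite group order computed above. I do not expect a genuine obstacle here: the substance was already done in assembling the semidirect-product description of $C$, and this Corollary is essentially an order count. The mild subtlety is purely bookkeeping — confirming that the factors indexed by blocks of trivial, singleton type contribute a harmless factor of $1$ so that the displayed product is well defined even when $J$ is infinite.
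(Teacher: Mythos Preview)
Your proposal is correct and matches the paper's approach: the paper states this corollary without proof, treating it as immediate from the preceding structural description of $C$ as a cartesian product of wreath products $C_{ij}\wr{\rm Sym}(I_j)$, which is exactly the decomposition you invoke. Your bookkeeping about cofinitely many factors being trivial is the right way to make sense of the possibly infinite product; note that since the premise of the corollary is that ${\rm Aut}_{\mathfrak C}(\mathcal O)$ is finite, it follows directly that all but finitely many wreath product factors are trivial, which forces both $|C_{ij}|=1$ and $|I_j|=1$ for those $j$, so each such factor contributes $1$ to the displayed product.
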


\begin{exam}
 Let $\mathfrak C=\mathfrak M^+$, the category of oriented maps, which has parent group
$\Gamma=\Delta(\infty,2,\infty)=\langle X, Y\mid Y^2=1\rangle$.
For each integer $n\ge 2$ let $\tilde{\mathcal M_n}$ be the minimal regular cover of the map ${\mathcal M}_n\in{\mathfrak M}^+$ in Figure~\ref{MnHn}. This is a regular oriented map with automorphism and monodromy group $S_n$ (in its regular representation) corresponding to the epimorphism $\Gamma\to S_n, X\mapsto (1,2,\ldots, n), Y\mapsto (1,2)$. If we take $\mathcal M$ to be the disjoint union of these maps $\tilde{\mathcal M}_n$, then ${\rm Aut}_{\mathfrak M^+}(\mathcal M)$ is the cartesian product $\prod_{n\ge 2}S_n$. This uncountable group is very rich in subgroups: for instance, every finitely generated residually finite group (such as any finitely generated linear group, by Mal'cev's Theorem~\cite{Mal}) can be embedded in a cartesian product of finite groups of distinct orders, and hence (by Cayley's Theorem) can be embedded in ${\rm Aut}_{\mathfrak M^+}(\mathcal M)$.
\end{exam}


\end{document}